\documentclass[12pt,one,english]{amsart}
\usepackage{mathabx}
\usepackage{braket}
\usepackage{bbm}
\usepackage[T1]{fontenc}
\usepackage{amsfonts}
\usepackage{mathrsfs}
\usepackage{stmaryrd}
\usepackage{upgreek}
\usepackage{textcomp}
\usepackage[utf8]{inputenc}

\usepackage{extarrow}
\usepackage{enumerate}
\usepackage{enumitem}
\usepackage{braket}
\usepackage{graphicx}
\usepackage[colorlinks=true, linkcolor=blue]{hyperref}
\usepackage{latexsym}
\usepackage{mathtools}
\usepackage{mathtext}
\usepackage{amsmath}
\usepackage{amssymb}
\usepackage{amsthm}
\usepackage{tikz}
\usepackage{tikz-cd}
\usepackage[all,cmtip]{xy}
\usepackage[mathscr]{eucal}
\usepackage[toc,page]{appendix}
\usepackage{a4wide}
\usepackage[
backend=biber,
style=alphabetic-verb,
isbn=false, doi=false, url=false
]{biblatex}
\AtEveryBibitem{%
  \clearfield{addendum}%
  \clearfield{howpublished}%
}

\usepackage{csquotes}
\MakeOuterQuote{"}

\usepackage[colorinlistoftodos]{todonotes}
\setuptodonotes{fancyline}
\usepackage{setspace} 

\usetikzlibrary{arrows}
\usetikzlibrary{matrix}
\usetikzlibrary{shapes}
\usetikzlibrary{snakes}
\usetikzlibrary{matrix}

\DeclareMathOperator{\Spec}{\textup{Spec}}

\usepackage{xcolor}
\theoremstyle{plain}
\newtheorem{thm}{Theorem}[section]
\newtheorem*{thm*}{Theorem}
\newtheorem{lem}[thm]{Lemma}

\newtheorem{prop}[thm]{Proposition}

\theoremstyle{remark}
\newtheorem{rmk}[thm]{Remark}

\newtheorem*{rmk*}{Remark}
\newtheorem{ex}[thm]{Example}

\theoremstyle{definition}
\newtheorem{defn}{Definition}[section] 

\newtheorem*{const*}{Construction}
\newtheorem{conv}[defn]{Conventions}

\theoremstyle{plain}
\newtheorem{thmI}{Theorem}

\newcommand{\sE}{{\mathcal E}}
\newcommand{\sF}{{\mathcal F}}
\newcommand{\sG}{{\mathcal G}}

\newcommand{\sI}{{\mathcal I}}

\newcommand{\Isoc}{{\rm Isoc}}

\AtEveryBibitem{\clearlist{language}}

\begin{document}
\title{Isocrystals on unibranch varieties and open immersions}

\author{Adrian Langer}
\email{alan@mimuw.edu.pl}
\address{University of Warsaw, Institute of Mathematics,
	ul.\ Banacha 2, 02-097 Warszawa, Poland}
	
\author{Lei Zhang}
\email{cumt559@gmail.com}
\address{Sun Yat-Sen University\\
    School of Mathematics (Zhuhai)\\    
    Zhuhai, 
    Guangdong Province\\ China}

\date{\today}

\begin{abstract}
We prove that for a connected, geometrically unibranch variety \(X\) over a
perfect field of characteristic \(p>0\) and a dense open subset \(U\subset X\),
the canonical homomorphism between the Tannaka duals of the categories of
overconvergent isocrystals is a quotient map. 
We also show that the analogous statement for convergent isocrystals, and for
all isocrystals, fails; the counterexample is based on Crew's construction of a unit‑root \(F\)-isocrystal that is convergent but not overconvergent.
\end{abstract}

\maketitle


\section*{Introduction}

Let $X$ be a unibranch complex analytic variety and let $U\subset X$ be the complement of a proper closed analytic subset. Then we have a surjective map $\pi_1^{\sf top}(U) \to \pi_1^{\sf top}(X)$
of topological fundamental groups of $U $ and $X$ (see, e.g., \cite[(0.7) (B)]{Fulton-Lazarsfeld1981}). 
An analogue of this theorem  in positive characteristic for the fundamental group scheme coming from $F$-divided bundles was proven in \cite{Langer-Zhang2025}. The main aim of this note is to check if an analogous result holds in positive characteristic in case of fundamental group schemes coming from isocrystals, convergent isocrystals and overconvergent isocrystals.

More precisely, let \(k\) be a perfect field of characteristic \(p>0\), and let \(K\) be a
complete discretely valued field of mixed characteristic \((0,p)\) with the valuation ring  \(V\)
and \(k\) as the residue field of  \(V\). For a separated \(k\)-scheme of finite type \(X\), we denote by
\(\Isoc(X/K)\) (resp. \(\Isoc^{\dagger}(X/K)\),
\(\Isoc^{\dagger\dagger}(X/K)\)) the category of isocrystals (resp. convergent,
overconvergent isocrystals) on \(X/K\). If \(X\) is connected and admits a \(k\)-rational point, the convergent and overconvergent categories are neutral Tannakian. If \(X\) is moreover regular, the same holds for \(\Isoc(X/K)\).
We write $\pi_1(X/K)$, $\pi_1^{\dagger}(X/K)$ and $\pi_1^{\dagger\dagger}(X/K)$
for the corresponding fundamental group schemes, suppressing the choice of a base point from the notation.
Then we prove the following result { (cf.~ Theorem \ref{relative gerbe} for a more general
statement):}

\begin{thmI}\label{main1} Assume that $X$ is a connected, geometrically
    unibranch, separated $k$-scheme of finite type and let	$U \hookrightarrow
    X$ be a dense open subscheme. {If $U(k)\neq\emptyset$,} then:
	\begin{enumerate}
		\item the canonical homomorphism  $\pi_1^{\dagger\dagger}(U/K)\to \pi_1^{\dagger\dagger}(X/K)$ is  faithfully flat,
		\item the canonical homomorphisms 
		$\pi_1(U/K)\to \pi_1(X/K)$ and $\pi_1^{\dagger}(U/K)\to
        \pi_1^{\dagger}(X/K)$ need not be faithfully flat even when $X$ is
        smooth.
	\end{enumerate}
\end{thmI}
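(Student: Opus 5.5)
For part (1) I would use the standard Tannakian criterion for faithful flatness of $G'\to G$ induced by the restriction functor $j^*\colon \Isoc^{\dagger\dagger}(X/K)\to\Isoc^{\dagger\dagger}(U/K)$, where $j\colon U\hookrightarrow X$. By Deligne--Milne it suffices to check two things. First, $j^*$ is fully faithful. Second, its essential image is closed under subobjects: for every $E\in\Isoc^{\dagger\dagger}(X/K)$ and every subobject $F'\subset j^*E$ in $\Isoc^{\dagger\dagger}(U/K)$, there is a subobject $F\subset E$ with $j^*F\cong F'$.

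Full faithfulness reduces, via internal Hom, to the statement that $H^0(X,E)\to H^0(U,j^*E)$ is an isomorphism, i.e.\ that horizontal sections extend. For subobject closure, a rank-$r$ subobject $F'\subset j^*E$ corresponds, through $\det$, to a rank-one subobject $\det F'\subset j^*\Lambda^rE$. Such a rank-one subobject is a horizontal section of $j^*(\Lambda^rE)\otimes L'^{\vee}$, where $L'=\det F'$. So once we know that rank-one overconvergent isocrystals on $U$ extend, both conditions reduce to extension of horizontal sections. Alternatively, one can argue directly that $F'$ extends as the "saturation" of $F'$ in $E$.

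To get extension of sections I would work locally and use descent. Overconvergent isocrystals satisfy proper (indeed h-) descent, and $X$ is geometrically unibranch, so the normalization $\nu\colon \tilde X\to X$ is a universal homeomorphism. A universal homeomorphism induces an equivalence on $\Isoc^{\dagger\dagger}$, essentially by invariance under radicial surjective maps. This reduces us to $X$ normal. Next, take a de Jong alteration $Y\to X$ with $Y$ smooth and $Y\times_X U$ dense. On smooth $Y$, the restriction $\Isoc^{\dagger\dagger}(Y)\to\Isoc^{\dagger\dagger}(V)$ for dense open $V$ is fully faithful (Kedlaya's full faithfulness theorem), so sections on $U$ pull back to sections on $Y_U$, which extend uniquely to $Y$. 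I would then descend them to $X$ using the descent datum on $Y\times_XY$. Normality, or unibranchness, is exactly what makes the descent work: the extended section satisfies the cocycle condition on $Y\times_X Y$, because the condition holds on the dense open part over $U$. Each connected component of $Y\times_XY$ meets the preimage of $U$, which is where unibranchness enters, and sections on components are determined by restriction by the same full faithfulness applied to a resolution of those components.

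The main obstacle I expect is the subobject part. Extending sections gives a candidate $F\subset E$, but one must show it is actually an overconvergent sub-isocrystal on all of $X$, i.e.\ locally a direct summand. I would avoid constructing $F$ directly by using the rank-one plus Plücker trick. This requires extending the rank-one object $L'$ to $X$, and I would again go through the alteration: $L'$ extends on smooth $Y$ because it is a subobject of $j^*E|_Y$ and Kedlaya's result gives closure under subobjects on smooth varieties. I would then descend as above, checking the Plücker relations over $U$ by density.

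For part (2), I would use Crew's unit-root $F$-isocrystal $\mathcal{E}$ on an open $U$ of a smooth curve (or on a smooth $X$). It is convergent on $U$ but not overconvergent. The idea is to build a smooth $X\supset U$ and an object on $U$, coming from $\Isoc^\dagger(U)$, that is a subquotient of a restriction from $X$ but does not extend to $X$. For instance, take an extension of trivial objects that splits into $\mathcal{E}$-twisted pieces, so that closure under subobjects fails. Concretely, I would realize Crew's example as a convergent sub-isocrystal of some $j^*E$ with $E$ overconvergent on $X$, and show that it cannot come from $X$ because that would force overconvergence along $X\setminus U$.
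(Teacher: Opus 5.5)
Your part (1) follows essentially the paper's argument: the Deligne--Milne criterion (the paper's \cite[Lemma~1.6]{Langer-Zhang2025}), a de Jong alteration $h\colon Y\to X$, Kedlaya's results on the smooth $Y$, and proper descent along $h$. Unibranchness enters, as you say, through the fact that every connected component of $Y\times_XY$ meets the part over $U$. Proving full faithfulness by descending sections instead of citing Tsuzuki is fine.

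The normalization step and the rank-one/Pl\"ucker detour buy nothing. Descending $L_Y\subset h^*\Lambda^rE$ uses exactly the argument that descends any extension $G'\subset h^*E$ directly: the composite $\mathrm{pr}_1^*G'\to\mathrm{pr}_2^*h^*E\to\mathrm{pr}_2^*(h^*E/G')$ vanishes over $U$, hence everywhere. Your ``locally a direct summand'' worry is moot, because the descended map $G\to E$ is a monomorphism, $h^*$ being exact and faithful.

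Two details need correcting. First, the part of $Y\times_XY$ over $U$ need not be dense: blowing up a point $x\notin U$ of a surface creates the component $h^{-1}(x)\times h^{-1}(x)$, which misses it. So the right tool is the rank argument of Lemma~\ref{lem:pullback-faithful} on connected components, not full faithfulness on resolutions of components. Second, the claim that every connected component meets the part over $U$ needs proof; the paper reduces to $h$ finite by Stein factorization and then uses \cite[Tag~0F32]{stacks-project}.

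Part (2) has a genuine gap. You never fix the ambient overconvergent $E$ on $X$, and the mechanism ``an extension would force overconvergence along $X\setminus U$'' does not close. Crew's unit-root object fails to be overconvergent along the whole boundary of $U$ in a compactification. For the natural non-proper choice ($X=Y(N)$, $U$ the ordinary locus), a convergent extension to $X$ says nothing about the cusps. So a contradiction would require showing that the unit-root part cannot extend across the supersingular points, which is the very statement to be proved.

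The missing idea is the paper's pointwise slope obstruction. Take $\sE$ to be the first relative rigid cohomology of the universal elliptic curve. A sub-$F$-isocrystal $\sG\subset\sE$ extending the unit-root part $\sF$ would give a rank-one subobject of $\sE_{\bar x}$ at a supersingular point $\bar x$. But $\sE_{\bar x}$ is isoclinic of slope $\tfrac12$ and rank $2$, hence simple by Dieudonn\'e--Manin.

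You also do not treat $\pi_1(U/K)\to\pi_1(X/K)$, where an extension in $\Isoc(X/K)$ need not even be convergent. Nor do you treat extensions that carry no Frobenius a priori. The paper handles both by showing that any sub-isocrystal $\sG$ extending $\sF$ is Frobenius-stable: the composite $F^*\sG\to F^*\sE\xrightarrow{\phi_\sE}\sE\to\sE/\sG$ vanishes on $U$, hence on $X$ by Lemma~\ref{lem:pullback-faithful}, so the slope argument applies to $\sG$.

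Finally, your sentence about ``an extension of trivial objects that splits into twisted pieces'' does not describe a coherent construction and should be dropped.
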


In case $X$ is
smooth (1) follows from \cite[Thm.~5.2.1 and Prop.~5.3.1]{Ked07}. In general, (1) follows from this by using alterations, some descent and the results of N. Tsuzuki (see \cite{Tsu12}). The counterexample in (2) is based on a
classical example due to R. Crew (see \cite[Rmk.~4.15]{Crew87}; more
straightforward arguments appear in \cite[Rmk.~5.3.2]{Ked07} and \cite[Example~4.6
and Rmk.~5.12]{Kedlaya2022}).

\section{Preliminaries}
Throughout, \(k\) denotes a perfect field of characteristic \(p>0\), and \(K\) a complete
discretely valued field of mixed characteristic \((0,p)\) with valuation ring \(V\) and uniformizer \(\pi\).
A \emph{variety} is a separated \(k\)-scheme of finite type.
For isocrystals we always assume that \(v(p)\le p-1\);
this condition guarantees that the ideal \((\pi)\subset V\) possesses a unique PD‑structure
(see~\cite[Example~3.2]{BO78}).
When discussing \(F\)-isocrystals we moreover fix a lift \(\sigma\colon V\cong
V\) of the absolute Frobenius of \(k\). {Set $K_0\coloneqq \set{x\in
K|\sigma(x)=x}$.}

For a variety \(X\), we denote by
\begin{itemize}
\item \(\Isoc(X/K)\) (resp. \(\Isoc^{\dagger}(X/K)\),
    \(\Isoc^{\dagger\dagger}(X/K)\)) the category of isocrystals (resp. convergent,
    overconvergent isocrystals) on \(X/K\),
\item $F$-\(\Isoc^{}(X/K)\) (resp. $F$-\(\Isoc^{\dagger}(X/K)\),
    $F$-\(\Isoc^{\dagger\dagger}(X/K)\)) the category of $F$-isocrystals (resp. convergent,
    overconvergent $F$-isocrystals) on \(X/K\).
\end{itemize}
We will write ($F$)-isocrystals or ($F$)-$\Isoc(X/K)$ to mean isocrystals
\emph{and} $F$-isocrystals. Similarly, we will write (convergent) isocrystals,
meaning convergent isocrystals \emph{and} isocrystals.

All convergent/overconvergent isocrystal categories are \(K\)-linear abelian rigid tensor categories
(cf.~\cite[2.9]{Ogus84}, \cite[2.1.9, 2.2.3]{Berthelot96}). If $X$ is regular, then so
is the category of isocrystals (cf.~\cite[{Corollary
2.4}]{drinfeld24}). {If $X$ is connected, then} these are Tannakian categories over a finite field
extension $L/K$. If $X$ is geometrically connected, then these are
$K$-Tannakian categories. Since every $L$-Tannakian category corresponds
uniquely to an affine gerbe over $L$, we will denote $\Pi_{X}^{\Isoc^\dagger}$
the affine $L$-gerbe corresponding to $\Isoc^{\dagger}(X/K)$, and similarly for
other types of isocrystals.

{An \(F\)-isocrystal is a pair \((E,\Phi_E)\), where \(E\) is an isocrystal
and
 \(
 \Phi_E:F_X^*E\xrightarrow{\sim}E.
 \)
 A morphism \(u:(E,\Phi_E)\to(E',\Phi_{E'})\) is a morphism \(u:E\to E'\)
 satisfying
 \(
 u\circ\Phi_E=\Phi_{E'}\circ F_X^*u.
 \)
 Consequently, the categories of \(F\)-isocrystals are naturally
\(K_0\)-linear, but need not be \(K\)-linear. If $\Isoc^{}(X/K)$ is an
$L$-Tannakian category, the Frobenius map induces a $\sigma$-compatible automorphism
\(\sigma_L\) of \(L\), 
so the
corresponding $F$-isocrystal category $F$-$\Isoc(X/K)$ is a Tannakian category
over  $L_0\coloneqq\set{x\in L|\sigma_L(x)=x}$. Similar conclusions hold for convergent/overconvergent
$F$-isocrystals. }

For any map of $k$-varieties
\(f\colon Y\rightarrow X\), we have restriction functors
\begin{align*}
& f^*\colon (F)\text{-}\Isoc(X/K)\longrightarrow (F)\text{-}\Isoc(Y/K)\\
& f^{\dagger}\colon(F)\text{-}\Isoc^{\dagger}(X/K)\longrightarrow (F)\text{-}\Isoc^{\dagger}(Y/K)\\
& f^{\dagger\dagger}\colon(F)\text{-}\Isoc^{\dagger\dagger}(X/K)\longrightarrow(F)\text{-}\Isoc^{\dagger\dagger}(Y/K)
\end{align*}

\begin{lem}[Faithfulness of pullback]
\label{lem:pullback-faithful}
Suppose that \(X\) is connected and $Y$ is nonempty. Then the above functors
\(
f^{\dagger}
\) and $f^{\dagger\dagger}$
are faithful. If $X$ is regular, then $f^*$ is faithful as well.
\end{lem}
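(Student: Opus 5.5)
We reduce faithfulness to a statement about objects. In a rigid abelian tensor category $\mathcal C$ over a field whose unit object has endomorphism ring a field (which is the case here, since $X$ is connected), a morphism $u\colon E\to E'$ corresponds to a morphism $\mathbf 1\to E^\vee\otimes E'$. The pullback functors are exact tensor functors. So $f^\dagger(u)=0$ means that the image of $u$, which is a subobject of $E'$, has zero pullback. It is therefore enough to prove the following: \emph{if $G$ is an object of $\Isoc^\dagger(X/K)$ (resp.\ $\Isoc^{\dagger\dagger}$, resp.\ $\Isoc$ with $X$ regular) with $f^\dagger G=0$, then $G=0$.} Pullback commutes with taking the image $\operatorname{Im}(u)$, because exact functors preserve images. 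Hence this statement yields faithfulness.

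To show that nonzero objects pull back to nonzero objects, we first reduce to a point. Choose a closed point $y\in Y$ and set $x=f(y)$. Composing with $\Spec k(y)\to Y$, it suffices to show that the pullback of $G$ to the closed point $x$ (and then to $\Spec k(y)$) is nonzero. Over a point, isocrystals are simply finite-dimensional vector spaces over the appropriate $K$-extension $K(x)$, and the extension to $k(y)$ is a base change of fields, which is faithful. So the claim becomes: a nonzero isocrystal $G$ on a connected $X$ has nonzero fibre at every closed point $x$.

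For the convergent case, $G$ is realised locally as a coherent module on the tube $]X[$ in a lifting $P$ (or in $\mathcal P_K$ of a formal embedding), with an integrable connection. Coherent modules with connection on smooth rigid spaces are locally free. Hence the rank of $G$ is locally constant on $]X[$, and this rank is well defined on $X$ via the specialization map, compatibly on overlaps of embeddings. The fibre at $x$ is the restriction to the tube $]x[$, an open polydisc, on which $G$ is free of that rank. Since $X$ is connected, the rank is constant. So if $G\neq0$, the rank is positive everywhere and the fibre at $x$ is nonzero. The overconvergent case is the same, working on strict neighbourhoods of the tube, where the connection again forces local freeness. We restrict to a point of $X$ lying in $\overline X$, and use that the restriction to $]X[$ is faithful, being compatible with the rank. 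For $\Isoc(X/K)$ with $X$ regular, we use that crystals up to isogeny on regular $X$ are locally free, as in \cite[Cor.~2.4]{drinfeld24}, which is the input that makes the category abelian. Then the same locally constant rank argument applies on the connected $X$.

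The main obstacle is the well-definedness and local constancy of rank in the convergent and overconvergent settings when $X$ is singular. The tube of $X$ in a smooth formal scheme is a smooth rigid space, but local freeness must be argued on $]X[_P$ itself, not on $X$. I would first try to invoke the standard fact (Berthelot, \cite[2.2.3]{Berthelot96}) that (over)convergent isocrystals are locally free on (strict neighbourhoods of) tubes. I would then deduce constancy of rank along connected $X$ by using the specialization of overlapping tubes of points connected through curves in $X$.
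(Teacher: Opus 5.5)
Your reduction matches the paper's: you pass from $u$ to $\operatorname{Im}(u)$ by exactness, then to a closed point $y\in Y$ with $x=f(y)$. You diverge at the final claim, that a nonzero $G$ on connected $X$ has nonzero fibre at $x$. There the proposal has a genuine hole for singular $X$ in the convergent and overconvergent cases. Local freeness of the realization on $]X_i[_{P_i}$ only gives a rank that is constant on connected components of the tube. To conclude that the rank is constant on $X$, you need $]X_i[_{P_i}$ to have no more connected components than $X_i$, and nothing in the proposal supplies this. Your suggested remedy does not work as phrased. Tubes of distinct closed points are disjoint (they are fibres of the specialization map), so they never overlap. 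Restricting to a curve $C\subset X$ just reproduces the same question for $]C[$, unless you also pass to the normalization of $C$, which is smooth and so has a connected tube, namely the generic fibre of a smooth lift. Similarly, in the overconvergent case you invoke faithfulness of restriction to $]X[$. That is a statement of the same nature as the lemma and itself needs justification.

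The missing idea is the one the paper uses, and you already had it in your first paragraph. Since $X$ is connected, $L=\mathrm{End}(\mathbf 1)$ is a field and the category is Tannakian over $L$. Put $\dim G=\operatorname{tr}(\mathrm{id}_G)\in L$. The pullback $\omega$ to the closed point is an exact tensor functor, so $\omega(\dim G)=\dim\omega(G)$, the rank of the fibre. Moreover $\omega$ restricted to $L$ is a homomorphism of fields, hence injective. So $\omega(G)=0$ forces $\dim G=0$, and in a Tannakian category in characteristic $0$ that forces $G=0$.

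This argument treats all three categories uniformly; regularity of $X$ is needed only to know that $\Isoc(X/K)$ is a rigid abelian tensor category. It also yields the constancy of rank you were after, since the rank of $G$ at any point is the integer $\dim G$. Your geometric route can be completed, either with a connectedness result for tubes or with the curve-plus-normalization argument above. But that is exactly the input the Tannakian property already packages.
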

\begin{proof} Let's prove the claim for convergent isocrystals, the others are
    completely analogous. Replacing $Y$ by a closed point, we may and do assume
    that $Y=\Spec(l)$, where $l$ is a finite field extension of $k$. In this
    case, both $\Isoc^\dagger(X/K)$ and $\Isoc^\dagger(Y/K)$ are Tannakian
    categories, but perhaps over different fields. The point is that
    $f^\dagger$ preserves ranks and it is exact. Thus if 
\(\varphi: E \to F\) is a morphism of convergent isocrystals on \(X\) such
that \(f^\dagger\varphi = 0\) in \(\operatorname{Isoc}^\dagger(Y/K)\), then we have
\[
f^\dagger\mathcal{I} = \operatorname{Im}(f^\dagger\varphi) = 0.
\]
This implies that $\sI$ has rank 0 as well, i.e. $\sI=0$ and therefore
$\varphi=0$.
\end{proof}

\begin{rmk}
The connectedness of \(X\) is essential. If \(X\) is disconnected, say \(X = X_1 \sqcup X_2\) and \(f(Y)\) lies entirely in \(X_1\), then a non-zero morphism supported on \(X_2\) will pull back to zero and faithfulness fails.
\end{rmk}

\section{Extension of subisocrystals in the overconvergent case}

\begin{prop}
\label{prop:extension-subisocrystal}
Let $U \hookrightarrow X$ be an open immersion of geometrically unibranch
connected separated $k$-schemes of finite type. Suppose that $U$ is dense in
$X$.  Let $E \in$ $(F)$-$\Isoc^{\dagger\dagger}(X/K)$ and let $H$ be a
subisocrystal of $E|_{U}$ in $(F)$-$\Isoc^{\dagger\dagger}(U/K)$. Then $H$ is the restriction to $U$ of a unique subisocrystal of $E$ in $(F)$-$\Isoc^{\dagger\dagger}(X/K)$.
\end{prop}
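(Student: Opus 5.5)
Uniqueness and existence are handled separately; uniqueness is formal and existence is the real content. For \emph{uniqueness}, suppose $H_1,H_2\subset E$ are subisocrystals in $(F)$-$\Isoc^{\dagger\dagger}(X/K)$ with $H_1|_U=H_2|_U=H$. Consider $H_1\cap H_2$ and the composites $H_i\to E\to E/H_j$. Restriction to $U$ is exact, and these composites vanish on $U$. By Lemma \ref{lem:pullback-faithful}, applied with $X$ connected and $U\neq\emptyset$, restriction is faithful, so the composites vanish on $X$. Hence $H_1\subset H_2$ and $H_2\subset H_1$. The Frobenius structure adds nothing here, since sub-objects of $F$-isocrystals are in particular sub-isocrystals.

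For \emph{existence} I would follow the strategy announced in the introduction: first the smooth case, then descent along an alteration.

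\emph{Step 1 (smooth case).} Assume $X$ is smooth. Kedlaya's full faithfulness of restriction for overconvergent (F-)isocrystals along dense opens \cite[Thm.~5.2.1]{Ked07} says $\Isoc^{\dagger\dagger}(X/K)\to\Isoc^{\dagger\dagger}(U/K)$ is fully faithful. Combined with \cite[Prop.~5.3.1]{Ked07}, it also says the essential image is closed under subobjects. The latter is Kedlaya's statement that a subobject of the restriction of an overconvergent isocrystal on smooth $X$ extends.

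\emph{Step 2 (reduction to this case).} For general geometrically unibranch $X$, I would choose a proper surjective generically étale alteration $g\colon X'\to X$ with $X'$ smooth, using de Jong. I would then set $U'=g^{-1}(U)$, which is dense in $X'$ after discarding components not dominating $X$. Applying Step 1 on each component of $X'$ gives a subobject $H'\subset g^*E$ extending $g^*H$. Next I would check that $H'$ carries descent data relative to $X'\times_XX'$. The two pullbacks $p_1^*H'$ and $p_2^*H'$ are subobjects of the pullback of $E$ to $X'\times_X X'$, and they agree over the preimage of $U$. Since the fiber product need not be smooth or have $U$ dense, I would instead compare them after a further alteration of $X''=X'\times_XX'$ restricted to dominant components. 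I would then use uniqueness, i.e. faithfulness, on connected components. Finally, Tsuzuki's proper (cohomological) descent for overconvergent isocrystals \cite{Tsu12} descends $H'$ to a subobject $\tilde H\subset E$ on $X$. By uniqueness, $\tilde H|_U=H$.

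\emph{Main obstacle.} The hardest step is the descent in Step 2, specifically the components of $X'\times_XX'$ that lie over $X\setminus U$. On those components one cannot directly compare $p_1^*H'$ and $p_2^*H'$ via $U$. This is exactly where geometric unibranchness must enter. My first attempt would be to use it to arrange that every component of the fiber product over a point of $X$ is connected, through the fibers, to a component dominating $X$. That works because a unibranch point has connected preimage in the normalization, so $X^{\rm norm}\to X$ is a universal homeomorphism. One can then reduce to normal $X$, where the alteration's fibers connect in a controlled way. With this in hand, the agreement of the two pullback subobjects propagates from dominant components by faithfulness, Lemma \ref{lem:pullback-faithful}. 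If this is too delicate, I would first replace $X$ by its normalization, descend along the universal homeomorphism using Tsuzuki's invariance results, and only then run the alteration argument on a normal $X$.
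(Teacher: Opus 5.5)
Your overall plan is the paper's. You take Kedlaya's extension on a de~Jong alteration $X'\to X$, then use proper descent, checking the two pullbacks of $H'$ componentwise on $R=X'\times_X X'$ via Lemma~\ref{lem:pullback-faithful}. Your uniqueness argument by faithfulness is correct and more direct than the paper's. The gap is the step you flag as the main obstacle. It is the only place where geometric unibranchness enters, and your sketch does not establish it.

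What is needed is exactly that $R_U=U'\times_U U'$ meets every connected component of $R$, i.e.\ that $\pi_0(R_U)\to\pi_0(R)$ is surjective. Your proposed mechanism is: the normalization of $X$ is a universal homeomorphism, so reduce to normal $X$, ``where the alteration's fibers connect in a controlled way''. This does not prove the claim. After normalizing, exactly the same question about $X'\times_X X'$ remains. Since $X'\to X$ is only generically finite, not birational, there is no Zariski-type connectedness of its fibers to appeal to. The detour through the normalization is also unnecessary, and it would additionally require invariance of overconvergent isocrystals under universal homeomorphisms. The further alteration of $X''$ is unnecessary too: Lemma~\ref{lem:pullback-faithful} applies directly to any connected component of $R$ that meets $R_U$.

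The missing argument, as in the paper, uses the Stein factorization $X'\to Y\to X$.
\begin{itemize}
\item All base changes of $X'\to Y$ are proper surjective with geometrically connected fibers, so they induce bijections on $\pi_0$.
\item Applying this to $X'\times_X X'\to Y\times_X X'\to Y\times_X Y$, and to the corresponding maps over $U$, reduces the $\pi_0$-claim to the finite surjective map $Y\to X$, with $Y$ integral.
\item A finite surjective map from an integral scheme onto a geometrically unibranch $X$ is universally open (Chevalley's criterion; the paper cites \cite[Lemma 0F32]{stacks-project}).
\item Hence $Y\times_X Y\to Y$ is open, so the preimage $U_Y\times_U U_Y$ of the dense open $U_Y$ is dense and meets every connected component.
\end{itemize}
With this, the composite $\mathrm{pr}_1^*G'\to\mathrm{pr}_2^*E'\to\mathrm{pr}_2^*E'/\mathrm{pr}_2^*G'$ vanishes on all of $R$. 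This gives the descent isomorphism on $G'$. Its cocycle condition is inherited from that of $E'$, because $\mathrm{pr}_3^*G'\to\mathrm{pr}_3^*E'$ is a monomorphism.

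Finally, the descent step needs \emph{effective} descent of overconvergent isocrystals along proper surjective morphisms; the paper uses \cite[Thm.~5.1]{Laz22}. \cite{Tsu12} is not a descent theorem: the paper uses it for full faithfulness of restriction in Theorem~\ref{relative gerbe}. A full-faithfulness statement on descent data alone would not construct $\tilde H$.
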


\begin{proof}
We present the proof for overconvergent isocrystals, as the case of overconvergent $F$-isocrystals is completely analogous. Let
$f\colon X' \to X$ be de Jong's alteration (cf.~\cite[Thm. 4.1]{dJ96}), i.e., $X'$ is smooth connected and $f$ is
proper surjective and generically \'etale. Thus $U'\coloneqq f^{-1}(U)$ is dense in
$X'$.
 Let $E',H'$ denote
$f^*E$ and $f^*H$, respectively. Thanks to \cite[Thm.~5.2.1, Prop.~5.3.1]{Ked07}, the
subisocrystal $H'\hookrightarrow E'|_{U'}$ extends to a unique subisocrystal
$G'\hookrightarrow E'$ on $X'$.

We are going to apply proper descent of overconvergent isocrystals
(cf.~\cite[Thm. 5.1]{Laz22}) to the covering $f$ and the extension $G'\subset
E'$ of $H'\subset E'|_{U'}$, to show that
$G'$ descends to an overconvergent subisocrystal of $E$.

We claim that the map $\pi_{0,f}\colon\pi_0(U'\times_U U')\to
\pi_0(X'\times_X X')$ is surjective. To see that we consider the Stein
factorization $X'\to Y\to X$ of $f$, where the first map has geometrically
connected fibers and the second map is finite surjective. Let $U_Y$ be the
preimage of $U$ in $Y$. Consider the following commutative diagram
\[        
            \begin{tikzpicture}[xscale=2.0,yscale=1.2,bmr/.pic={\draw
                (0,0)--++(-90:2mm)--++(180:2mm);},baseline={([yshift=-.5ex]current bounding box.center)}]
                 \path
                        (0,0)     node (F) {$U_Y\times_UU'$}
                     +(0:1.5)  node (star) {$U_Y\times_UU_Y$}
                     ++(-90:1.5) node (X) {$Y\times_XX'$}
                     +(0:1.5)  node (Y) {$Y\times_XY$}
                     ++(-180:1.5) node (Z) {$X'\times_XX'$}
                     +(90:1.5) node (T) {$U'\times_UU'$};
                 \draw[->] (F)--(star);
                 \draw[->] (F)--(X);
                 \draw[->] (X)--(Y) node[midway,above,scale=.6]{};
                 \draw[->] (star)--(Y);
                 \draw[->] (T)--(Z)
                 node[midway,above,scale=.6]{};
                 \draw[->] (Z) to 
                 node[midway,above,scale=.6]{} (X);
                 \draw[->] (T) to (F);
            \end{tikzpicture}
        \]
The horizontal maps become isomorphisms after applying $\pi_0$. Thus we may assume that $f$ is finite surjective. Now applying \cite[Lemma \href{https://stacks.math.columbia.edu/tag/0F32}{0F32}]{stacks-project}, we find that $U'\times_U U'$ is dense in $X'\times_X X'$, so in particular, $\pi_{0,f}$ is surjective.

Set $R=X'\times_X X'$, $R_U=U'\times_U U'$, and let
$\alpha\colon \mathrm{pr}_1^*E'\xrightarrow{\sim}\mathrm{pr}_2^*E'$
be the canonical descent isomorphism. Denote by
$i_j\colon \mathrm{pr}_j^*G'\hookrightarrow \mathrm{pr}_j^*E'$
the inclusions, and let
$q\colon \mathrm{pr}_2^*E'\to
\mathrm{pr}_2^*E'/\mathrm{pr}_2^*G'$
be the quotient map. Consider the composition
\[
\mathrm{pr}_1^*G'\xrightarrow{i_1}\mathrm{pr}_1^*E'
\xrightarrow{\alpha}\mathrm{pr}_2^*E'
\xrightarrow{q}
\mathrm{pr}_2^*E'/\mathrm{pr}_2^*G'.
\]
Since $G'|_{U'}=H'$, the restriction of $\alpha$ to $R_U$ maps
$\mathrm{pr}_1^*H'$ into $\mathrm{pr}_2^*H'$, and hence the above composite
restricts to zero on $R_U$. As shown above, $R_U$  meets every connected component of $R$. Thus restriction to
$R_U$ is faithful componentwise by Lemma~\ref{lem:pullback-faithful}, and
therefore the composite is zero on $R$. Consequently, $\alpha\circ i_1$
factors through the kernel of $q$, namely through
$i_2\colon \mathrm{pr}_2^*G'\hookrightarrow \mathrm{pr}_2^*E'$. Hence we
obtain a unique morphism
\[
\lambda\colon \mathrm{pr}_1^*G'\longrightarrow \mathrm{pr}_2^*G'
\]
such that $i_2\circ\lambda=\alpha\circ i_1$:
\[
\begin{tikzpicture}[xscale=2.4]
  \node (G1) at (0,0) {$\mathrm{pr}_1^*G'$};
  \node (G2) at (1.2,0) {$\mathrm{pr}_2^*G'$};
  \node (E1) at (0,1) {$\mathrm{pr}_1^*E'$};
  \node (E2) at (1.2,1) {$\mathrm{pr}_2^*E'$};
  \draw[->,dashed] (G1) -- node[anchor=south]{$\lambda$} (G2);
  \draw[->] (E1) -- node[anchor=south]{$\alpha$} (E2);
  \draw[->] (G1) -- node[left]{$\subset$} (E1);
  \draw[->] (G2) -- node[left]{$\subset$} (E2);
\end{tikzpicture}
\]
Applying the same argument to $\alpha^{-1}$ yields a morphism
\[
\mu\colon \mathrm{pr}_2^*G'\longrightarrow \mathrm{pr}_1^*G'
\]
such that $i_1\circ\mu=\alpha^{-1}\circ i_2$. By the uniqueness of such
factorizations, we have $\mu\circ\lambda=\mathrm{id}$ and
$\lambda\circ\mu=\mathrm{id}$; hence $\lambda$ is an isomorphism.

It remains to check the cocycle condition. Let
$T=X'\times_X X'\times_X X'$, with projections
$\mathrm{pr}_i\colon T\to X'$ and
$\mathrm{pr}_{ij}\colon T\to R$. The cocycle condition for $\lambda$ is the
equality
\[
\mathrm{pr}_{13}^*\lambda =
\mathrm{pr}_{23}^*\lambda\circ \mathrm{pr}_{12}^*\lambda
\]
as morphisms $\mathrm{pr}_1^*G'\to \mathrm{pr}_3^*G'$. After composing with
the monomorphism $\mathrm{pr}_3^*G'\hookrightarrow \mathrm{pr}_3^*E'$, this
equality follows from the cocycle condition for $\alpha$. Therefore the
cocycle condition for $\lambda$ holds.

Thus $\lambda$ defines descent data on $G'$ relative to the proper covering
$f$, compatible with the descent data on $E'$. By proper descent
(cf.~\cite[Thm. 5.1]{Laz22}), $G'$ descends to a subisocrystal
$G\subset E$ on $X$. Its restriction to $U$ is $H$ by construction.

For uniqueness, suppose $G_1,G_2\subset E$ both restrict to $H$ on $U$. Then
$f^*G_1$ and $f^*G_2$ both restrict to $H'$ on $U'$; by the uniqueness of the
extension on $X'$, we have $f^*G_1=f^*G_2\subset E'$. By proper descent, this
implies $G_1=G_2$.
\end{proof}

\

\begin{thm}\label{relative gerbe} Let $U \hookrightarrow X$ be an open immersion of geometrically
    unibranch connected separated $k$-schemes of finite type. Suppose that $U$
    is dense in $X$. Then $\Pi^{\Isoc^{\dagger\dagger}}_U\to
    \Pi^{\Isoc^{\dagger\dagger}}_X$ and $\Pi^{F\text{-}\Isoc^{\dagger\dagger}}_U\to
    \Pi^{F\text{-}\Isoc^{\dagger\dagger}}_X$ are quotient maps.
\end{thm}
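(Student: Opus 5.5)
We will use the standard Tannakian criterion for a morphism of affine gerbes to be a quotient map (the gerbe version of \cite[Prop.~2.21(a)]{DM82}, as in Deligne's formalism). A morphism $\Pi_U\to\Pi_X$ induced by a tensor functor $\Phi\colon\mathcal{C}_X\to\mathcal{C}_U$ between Tannakian categories is a quotient map if and only if two conditions hold: $\Phi$ is fully faithful, and its essential image is closed under taking subobjects. Here $\Phi$ is the restriction functor $j^{\dagger\dagger}$ for $j\colon U\hookrightarrow X$. We apply the criterion to both $\Isoc^{\dagger\dagger}$ and $F$-$\Isoc^{\dagger\dagger}$. Both are Tannakian over the appropriate fields, since $X$ and $U$ are connected. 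In the $F$-case, the compatibility of the base fields $L_0$ is handled in the same way.

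\emph{Closure under subobjects.} Let $E\in(F)$-$\Isoc^{\dagger\dagger}(X/K)$ and let $H\subset E|_U$ be a subobject. By Proposition~\ref{prop:extension-subisocrystal}, $H=G|_U$ for some subisocrystal $G\subset E$. Hence every subobject of an object in the essential image is itself in the image, up to isomorphism, which is what the criterion requires.

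\emph{Full faithfulness.} Faithfulness is Lemma~\ref{lem:pullback-faithful}, since $X$ is connected and $U\neq\emptyset$. For fullness we use the graph trick. Let $E_1,E_2$ be objects on $X$ and let $\varphi\colon E_1|_U\to E_2|_U$ be a morphism. Consider its graph $\Gamma_\varphi\subset (E_1\oplus E_2)|_U$, which is a subobject in $(F)$-$\Isoc^{\dagger\dagger}(U/K)$. By Proposition~\ref{prop:extension-subisocrystal} it extends to a unique subobject $\Gamma\subset E_1\oplus E_2$. Let $p\colon\Gamma\to E_1$ be the first projection. Its restriction to $U$ is the isomorphism $\Gamma_\varphi\to E_1|_U$. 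We will show that $p$ itself is an isomorphism. Its kernel and cokernel are objects on $X$ whose restrictions to $U$ vanish, because restriction is exact. A nonzero object has positive rank, restriction preserves rank on connected $X$, and so an object with zero restriction is zero (this is the argument of Lemma~\ref{lem:pullback-faithful}). Hence $\ker p=\operatorname{coker}p=0$. Then $\psi\coloneqq p_2\circ p^{-1}\colon E_1\to E_2$ satisfies $\psi|_U=\varphi$, so the functor is full. In the $F$-case, all the morphisms involved are Frobenius-compatible, because the subobjects are taken in the $F$-category.

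The only delicate point is matching the Tannakian base fields. This is the step I expect to need the most care. If $\Isoc^{\dagger\dagger}(X/K)$ is $L$-Tannakian and $\Isoc^{\dagger\dagger}(U/K)$ is $L'$-Tannakian, full faithfulness applied to the unit object gives $L=\operatorname{End}(\mathbf 1_X)\cong\operatorname{End}(\mathbf 1_U)=L'$. Hence both gerbes live over the same field, and the criterion applies directly. The same argument works for $L_0$ in the $F$-case. With both conditions verified, $\Pi_U\to\Pi_X$ is a quotient map.
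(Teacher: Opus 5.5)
Your proof is correct, but it obtains full faithfulness differently from the paper. The paper cites \cite[Corollaries 1.2 and 1.3]{Tsu12} for full faithfulness of restriction on geometrically unibranch varieties. It uses Proposition~\ref{prop:extension-subisocrystal} only for closure under subobjects, and then invokes \cite[Lemma 1.6]{Langer-Zhang2025}.

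You instead derive fullness from the same Proposition by a graph argument. You extend $\Gamma_\varphi\subset (E_1\oplus E_2)|_U$ to some $\Gamma\subset E_1\oplus E_2$. You then use the rank argument of Lemma~\ref{lem:pullback-faithful}: an object on connected $X$ whose restriction to $U$ vanishes is itself zero. This shows $\Gamma\to E_1$ is an isomorphism. This is sound, and nothing is circular, since the Proposition's proof uses only Kedlaya's smooth case, Lazda's proper descent and Lemma~\ref{lem:pullback-faithful}, not Tsuzuki.

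What your route buys is that the theorem rests on a single geometric input, the subobject-extension Proposition. As a by-product it reproves Tsuzuki's full faithfulness in this setting. The paper's route is shorter and supports the two conditions of the Tannakian criterion by independent results. Your handling of the base fields, identifying $\operatorname{End}(\mathbf 1_X)$ with $\operatorname{End}(\mathbf 1_U)$ via full faithfulness, matches the paper's remark.
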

\begin{proof}
By \cite[Corollaries 1.2 and
    1.3]{Tsu12}, the restriction functor is
fully faithful; in particular, it identifies the endomorphism fields of
the tensor units. By Proposition \ref{prop:extension-subisocrystal}, every subobject of a restricted
object extends to \(X\). The assertion therefore follows from \cite[Lemma 1.6]{Langer-Zhang2025}.
\end{proof}

\section{A counterexample for convergent isocrystals}
\label{sec:counterexample}

The following example shows that Proposition~\ref{prop:extension-subisocrystal} is
false if one replaces ($F$)-$\Isoc^{\dagger\dagger}(X/K)$ by
($F$)-$\Isoc^{\dagger}(X/K)$ or ($F$)-$\Isoc^{}(X/K)$, even when $X$ is smooth.
The example is well known to experts; it appears implicitly in the work of Crew
\cite{Crew87} on unit‑root \(F\)-isocrystals; more
straightforward arguments appear in \cite{Ked07} and \cite{Kedlaya2022}).

\begin{ex}[Family of elliptic curves minus the supersingular fibres]
\label{ex:legendre}
{Let \(k\) be a perfect field of characteristic \(p>0\), and let $K=W(k)[1/p]$. Consider the open modular curve
$X\coloneqq Y(N)$ of level $N\geq 3$ with $p\nmid N$.} Let $U\subset X$ be the ordinary locus,
and let $f\colon E\to X$ be the universal elliptic curve. 
The first relative rigid
cohomology with trivial coefficients \(\mathcal{E} = R^1 f_{{\sc rig} *}
\mathcal{O}_{E/K}\) is an \emph{overconvergent} \(F\)-isocrystal on \(X\) of rank
\(2\) \cite[Thm.~2.1]{Crew87}. Its Frobenius slopes are \(0\) and \(1\) at
each closed point of \(U\). By the theory of unit‑root \(F\)-isocrystals
 \emph{loc.\ cit.}, there exists a rank‑\(1\) convergent
sub‑\(F\)-isocrystal \(\mathcal{F}\subset \mathcal{E}|_U\) on \(U\) (the unit‑root
part), which is unique up to isomorphism.  However, \(\mathcal{F}\) does
\emph{not} extend to any \emph{sub‑\(F\)-isocrystal} of $\sE$ on the whole \(X\).

Assume the contrary, i.e. that \(\mathcal{F}\) extended to a
sub‑\(F\)-isocrystal $\sG$ of \(\mathcal{E}\). Choose any geometric point
$\bar{x}=\Spec(\bar{k})$ of $X\setminus U$. Then $\sG_{\bar{x}}\subset \sE_{\bar{x}}$ is a
rank 1 subobject. At a supersingular point, both Frobenius slopes of \(\mathcal
E_{\bar x}\) are \(\frac{1}{2}\). By the Dieudonné–Manin classification
(cf.~\cite[Thm.~3.2]{Kedlaya2022}), a simple
object of slope \(\frac{a}{b}\), with \(\gcd(a,b)=1\), has rank \(b\). Hence the
rank-two isoclinic object \(\mathcal E_{\bar x}\) of slope \(\frac{1}{2}\) is
simple. Thus it cannot contain a rank‑\(1\) subobject. This
contradicts the existence of \(\mathcal{G}\subset \sE\).

Moreover, the convergent sub‑\(F\)-isocrystal \(\mathcal{F}\) of the
overconvergent $F$-isocrystal \(\mathcal{E}|_U\), regarded, after forgetting
Frobenius, as a subisocrystal of \(\sE|_U\)
in \(\operatorname{Isoc}(U/K)\), does not lift to a sub-isocrystal
of \(\mathcal{E}\) either. Indeed, once $\sG\subset \sE$ is a sub-isocrystal in
$\Isoc(X/K)$ lifting $\sF\subset\sE|_U$,  the composition of
maps
%
%
    \(
    F^*\sG\subset F^*\sE\xrightarrow{\phi_\sE}\sE\longrightarrow \sE/\sG.
    \)
     vanishes over \(U\), hence globally by Lemma
    \ref{lem:pullback-faithful}. Therefore \(\phi_\sE\)
factors through a map \(\phi_\sG\colon F^*\sG\to \sG\). Since \(\phi_\sG|_U\) is an
isomorphism, exactness and faithfulness of restriction show that
\(\phi_\sG\) is an isomorphism. This implies that $\sG\subset\sE$ is
a sub-\(F\)-isocrystal extending the map $\sF\subset \sE|_U$
in $F$-$\Isoc(U/K)$. This contradicts our previous conclusion.

By choosing a connected component of $X$, one sees that the analogue of
Proposition \ref{prop:extension-subisocrystal} for (convergent)
($F$)-isocrystals fails. By taking $k$ to be algebraically closed, one gets the
counterexample claimed in Theorem \ref{main1} (2).
\end{ex}

\section*{Acknowledgements}

{We} would like to thank Kieran Kedlaya for answering some questions concerning \cite{Kedlaya2022}.
{The first author} was partially supported by Polish National Centre (NCN) contract number 2025/59/B/ST1/02168. The second author was supported
by Guangdong Basic and Applied Basic Research Foundation grant 
2025A1515012175.

\printbibliography

@unpublished{Berthelot96,
    author = {Berthelot, Pierre},
    title = {Cohomologie rigide et cohomologie rigide à supports propres. {P}remière partie},
    note = {Prépublication 96-03, Université de Rennes 1, version provisoire 1991,
            available at \url{https://perso.univ-rennes1.fr/pierre.berthelot/}},
    year = {1996},
    month = {January},
}

@misc{BO78,
Author = {Pierre {Berthelot} and Arthur {Ogus}}, 
Title = {{Notes on crystalline cohomology.}}, 
Year = {1978}, 
Language = {English}, 
HowPublished = {{Princeton University Press, Princeton, N.J.}}, 
DOI = {10.1515/9781400867318}, 
MSC2010 = {14F30 14F20 14G10 14-02}, 
Zbl = {0383.14010}
}

@incollection {Crew87,
    AUTHOR = {Crew, Richard},
     TITLE = {{$F$}-isocrystals and {$p$}-adic representations},
 BOOKTITLE = {Algebraic geometry, {B}owdoin, 1985 ({B}runswick, {M}aine,
              1985)},
    SERIES = {Proc. Sympos. Pure Math.},
    VOLUME = {46, Part 2},
     PAGES = {111--138},
 PUBLISHER = {Amer. Math. Soc., Providence, RI},
      YEAR = {1987},
      ISBN = {0-8218-1480-X},
   MRCLASS = {14F30},
  MRNUMBER = {927977},
MRREVIEWER = {Jan\ Stienstra},
       DOI = {10.1090/pspum/046.2/927977},
       URL = {https://doi.org/10.1090/pspum/046.2/927977},
}

@article {dJ96,
    AUTHOR = {de Jong, A. J.},
     TITLE = {Smoothness, semi-stability and alterations},
   JOURNAL = {Inst. Hautes \'Etudes Sci. Publ. Math.},
  FJOURNAL = {Institut des Hautes \'Etudes Scientifiques. Publications
              Math\'ematiques},
    NUMBER = {83},
      YEAR = {1996},
     PAGES = {51--93},
      ISSN = {0073-8301,1618-1913},
   MRCLASS = {14E15 (14B05 14H10)},
  MRNUMBER = {1423020},
MRREVIEWER = {Marko\ Roczen},
       URL = {http://www.numdam.org/item?id=PMIHES_1996__83__51_0},
}

@incollection {drinfeld24,
    AUTHOR = {Drinfeld, Vladimir},
     TITLE = {A stacky approach to crystals},
 BOOKTITLE = {Dialogues between physics and mathematics---{C}. {N}. {Y}ang
              at 100},
     PAGES = {19--47},
 PUBLISHER = {Springer, Cham},
      YEAR = {[2022] \copyright 2022},
      ISBN = {978-3-031-17522-0; 978-3-031-17523-7},
   MRCLASS = {14F30},
  MRNUMBER = {4696502},
MRREVIEWER = {Ariyan\ Javanpeykar},
       DOI = {10.1007/978-3-031-17523-7\_2},
       URL = {https://doi.org/10.1007/978-3-031-17523-7_2},
}

@article{Ked07, 
    title={Semistable reduction for overconvergent $F$-isocrystals I: Unipotence and logarithmic extensions}, 
    volume={143}, 
    DOI={10.1112/S0010437X07002886}, 
    number={5}, 
    journal={Compositio Mathematica}, 
    author={Kedlaya, Kiran S.}, 
    year={2007}, 
    pages={1164–1212}
}

@article {Laz22,
    AUTHOR = {Lazda, Christopher},
     TITLE = {A note on effective descent for overconvergent isocrystals},
   JOURNAL = {J. Number Theory},
  FJOURNAL = {Journal of Number Theory},
    VOLUME = {237},
      YEAR = {2022},
     PAGES = {395--410},
      ISSN = {0022-314X,1096-1658},
   MRCLASS = {14F30 (14G17 14G22)},
  MRNUMBER = {4410031},
MRREVIEWER = {Adolfo\ Quir\'os},
       DOI = {10.1016/j.jnt.2019.09.014},
       URL = {https://doi.org/10.1016/j.jnt.2019.09.014},
}

@ARTICLE{Ogus84,     
Author = {Arthur {Ogus}}, 
Title = {{F-isocrystals and de Rham cohomology. II: Convergent isocrystals.}}, 
FJournal = {{Duke Mathematical Journal}}, 
Journal = {{Duke Math. J.}}, 
ISSN = {0012-7094; 1547-7398/e}, 
Volume = {51}, 
Pages = {765--850}, 
Year = {1984}, 
Publisher = {Duke University Press, Durham, NC; University of North Carolina, Chapel Hill, NC}, 
Language = {English}, 
DOI = {10.1215/S0012-7094-84-05136-6}, 
MSC2010 = {14F30 14F40 14G20 14D99}, 
Zbl = {0584.14008}
 }

@MISC{stacks-project,
    AUTHOR = "Authors, The Stacks Project",
    TITLE = "Stacks Project",
    URL = "https://stacks.math.columbia.edu/"
}

@article {Tsu12,
    AUTHOR = {Tsuzuki, Nobuo},
     TITLE = {A note on the first rigid cohomology group for geometrically
              unibranch varieties},
   JOURNAL = {Rend. Semin. Mat. Univ. Padova},
  FJOURNAL = {Rendiconti del Seminario Matematico della Universit\`a{} di
              Padova. Mathematical Journal of the University of Padua},
    VOLUME = {128},
      YEAR = {2012},
     PAGES = {17--53},
      ISSN = {0041-8994,2240-2926},
   MRCLASS = {14F43 (14G20)},
  MRNUMBER = {3076830},
MRREVIEWER = {Alan\ G. B. Lauder},
       DOI = {10.4171/RSMUP/128-3},
       URL = {https://doi.org/10.4171/RSMUP/128-3},
}

@article {Kedlaya2022,
    AUTHOR = {Kedlaya, Kiran S.},
     TITLE = {Notes on isocrystals},
   JOURNAL = {J. Number Theory},
  FJOURNAL = {Journal of Number Theory},
    VOLUME = {237},
      YEAR = {2022},
     PAGES = {353--394},
      ISSN = {0022-314X,1096-1658},
   MRCLASS = {14F30 (11S37 14F20 14G15)},
  MRNUMBER = {4410030},
MRREVIEWER = {Michel\ Gros},
       DOI = {10.1016/j.jnt.2021.12.004},
       URL = {https://doi.org/10.1016/j.jnt.2021.12.004},
}

@incollection {Fulton-Lazarsfeld1981,
    AUTHOR = {Fulton, William and Lazarsfeld, Robert},
     TITLE = {Connectivity and its applications in algebraic geometry},
 BOOKTITLE = {Algebraic geometry ({C}hicago, {I}ll., 1980)},
    SERIES = {Lecture Notes in Math.},
    VOLUME = {862},
     PAGES = {26--92},
 PUBLISHER = {Springer, Berlin},
      YEAR = {1981},
   MRCLASS = {14-02 (14C99 14E25)},
  MRNUMBER = {644817},
MRREVIEWER = {Knud L\o nsted},
}

@misc{Langer-Zhang2025,
AUTHOR = {Langer, Adrian and Zhang, Lei},
TITLE = {${F}$-divided bundles on normal ${F}$-finite schemes},
YEAR = {2025},
NOTE = {arXiv:2510.10582v1},
}

\end{document}